\documentclass[letterpaper, 10 pt, conference]{ieeeconf}  % Comment this line out if you need a4paper

\IEEEoverridecommandlockouts                              % This command is only needed if
\usepackage{amsmath}
\usepackage{amssymb}
\usepackage{amsthm}
\usepackage{bm}
\usepackage{xcolor}
\usepackage[algo2e,ruled,linesnumbered]{algorithm2e}
\usepackage{graphicx}
\usepackage{booktabs}
\usepackage{multirow}
\usepackage{float}
\usepackage{url}   % or \usepackage{hyperref}

\definecolor{CommentBlue}{RGB}{70,130,180}

\SetAlgoNoLine
\SetAlgoLongEnd
\SetKwSty{textbf}
\SetArgSty{textnormal}
\SetCommentSty{mycommentfont}
\SetKwComment{Comment}{// }{}
\SetKwInOut{Input}{Input}
\SetKwInOut{Output}{Output}

\newtheorem{assumption}{Assumption}

\newtheorem{corollary}{Corollary}
\newtheorem{proposition}{Proposition}
\newtheorem{definition}{Definition}
\newtheorem{lemma}{Lemma}
\theoremstyle{remark}
\newtheorem{remark}{Remark}

\newcommand{\norm}[1]{{\left\vert\kern-0.25ex\left\vert #1\right\vert\kern-0.25ex\right\vert}}

\title{\LARGE \bf
Distributed Grid Search for Hyperparameter Tuning
}

\author{Constança De Herdeiro Cachim$^{1}$, Andrea Martin$^{1}$, Nicola Bastianello$^{2}$% <-this % stops a space
\thanks{$^{1}$KTH Royal Institute of Technology, Stockholm, Sweden {\tt\small ccachim@kth.se, andrmar@kth.se}}%
\thanks{$^{2}$University College London, United Kingdom {\tt\small n.bastianello@ucl.ac.uk}}%
}

\begin{document}

\maketitle
\thispagestyle{empty}
\pagestyle{empty}

\begin{abstract}
The performance of many distributed optimization algorithms over peer-to-peer networks strongly depends on the choice of their hyperparameters. However, existing selection rules often require global quantities, yield conservative choices, or are tied to specific algorithms and hyperparameters. We propose Distributed Grid Search (D-GS), a consensus-based search method that evaluates and selects hyperparameter configurations across the network, requiring only the ability to run the tuned algorithm and evaluate local gradients. We also propose a zoom-in variant, Distributed Zoom Grid Search (D-ZGS), which improves empirical performance at a reduced cost. We prove that both methods terminate in finite time with all agents holding the same hyperparameter configuration, and quantify the (sub)optimality of this configuration. Numerical experiments on linear and logistic regression show that D-GS closely matches a centralized reference and that tuning and deployment cost a fraction of deploying algorithms with hyperparameters tuned using theoretical results.
\end{abstract}

\section{Introduction}

Learning and optimization tasks are increasingly performed over networks of agents, driven by applications in areas such as smart grids, multi-robot systems, and signal processing \cite{molzahn_2017, nedic_2018, vlaski_2023}. 
In these systems, data is generated in multiple places at once, and collecting it all centrally is often impractical as it can entail significant communication costs,  raise privacy concerns and create a single point of failure \cite{meng_2021}.
Instead, decentralized approaches avoid these limitations by processing data where it is collected and collaborating over networks \cite{liu_2024}. These architectures include federated settings, where a central server coordinates \cite{li_2021}, and distributed settings, where agents communicate directly with each other \cite{yang_2019}.

Decentralized algorithms rely on hyperparameters, such as the stepsize, that are not learned by the algorithm itself. Their selection has a decisive impact on convergence speed and solution quality, with poor choices causing algorithms to converge slowly or even diverge  \cite{bengio_2012, rikos_2025, xu_2017}.
In practice, these values are commonly tuned by hand, through costly trial and error, often assuming access to global information (\textit{e.g.} knowledge of the network topology and agents' data). This motivates the design of automated, decentralized hyperparameter tuning methods, which only rely on local computation and communications. In particular, in this paper we focus on the distributed setting.

The literature on decentralized hyperparameter tuning is relatively scarce compared to centralized set-ups; examples include the tuning of stepsizes via decentralized version of the Barzilai--Borwein and Polyak rules \cite{hu_2025, fainman_2026}, and line searches coordinated through consensus protocols \cite{kuruzov_2025, chen_2025, rikos_2025}. Coordination is important in this set-up, as tuning methods yielding heterogeneous stepsizes across agents might lead to degraded performance or even divergence; indeed, most algorithms are designed to use coordinated hyperparameters.

While these works focus exclusively on stepsize tuning, many decentralized algorithms are characterized by several hyperparameters. This motivates the development of more general tuning algorithms based on hyperparameters optimization.
Hyperparameter optimization casts the selection of a hyperparameter configuration as an optimization problem \cite{franceschi_2025}. It has been studied most extensively in centralized settings, with decentralized adaptations emerging only more recently, mostly in federated settings \cite{khodak_2021,khan_2024}. 
In peer-to-peer networks, existing approaches based on decentralized bilevel optimization require the performance metric to be differentiable in the hyperparameters and reach agreement only asymptotically~\cite{dong, chen_2025a}. A method that guarantees finite-time, network-wide agreement, and that is applicable to tune any algorithm, is thus still missing.

\textit{Contributions:} The contributions of this paper are as follows.
\begin{itemize}
    \item We propose Distributed Grid Search (D-GS), a method that scores candidate hyperparameters by average consensus and reaches agreement by minimum consensus, requiring only the ability to run the tuned algorithm and evaluate local gradients. We also propose a zoom-in variant, Distributed Zoom Grid Search (D-ZGS), which improves empirical performance while reducing computational cost.
    
    \item We prove that both methods terminate in finite time with all agents holding the same configuration. We also quantify the (sub)optimality of the selected configuration, and its dependence on the accuracy of the averaging protocol, detailing three such protocols with different trade-offs.
    
    \item We validate D-GS and D-ZGS on linear and logistic regression problems, comparing the selected configurations against a centralized reference, studying the cost of tuning against a theoretical stepsize, and examining the effect of averaging errors on the selection.
\end{itemize}

\textit{Outline:} Section~\ref{sec:problem_formulation} formulates the hyperparameter selection problem and its challenges in a distributed setting. Section~\ref{sec:prelims} reviews the consensus protocols that the proposed algorithms build on. Section~\ref{sec:algorithm} presents D-GS and D-ZGS and establishes their guarantees. Section~\ref{sec:results} reports our numerical results, and Section~\ref{sec:conclusion} concludes the paper.

\textit{Notation:} $\lVert \cdot \rVert$ denotes the Euclidean norm of a vector, $\mathbf{1}$ the vector of all ones and $\lvert \cdot \rvert$ the cardinality of a set. For a graph $\mathcal{G} = (\mathcal{V}, \mathcal{E})$, $\mathcal{V} = \{1, \dots, N\}$, $\mathcal{N}_i = \{ j \in \mathcal{V} \mid \{i,j\} \in \mathcal{E} \}$ denotes the neighbors of agent $i$ and $D$ the diameter of $\mathcal{G}$.

\section{Problem Formulation}
\label{sec:problem_formulation}

Consider $N \in \mathbb{N}$ agents that communicate over an undirected and connected graph $\mathcal{G} = (\mathcal{V}, \mathcal{E})$, cooperating toward the solution of the optimization problem
\begin{equation}\label{eq:problem}
    \min_{x_1, \dots, x_N \in \mathbb{R}^n} \; \sum_{i=1}^{N} f_i(x_i) \quad \text{s.t.} \quad x_1 = \ldots = x_N,
\end{equation}
where $x_i \in \mathbb{R}^n$ is the state of agent $i$ and $f_i : \mathbb{R}^n \to \mathbb{R}$ represents its local cost, privately held by agent $i$.

Let $\mathcal{A}$ be any distributed algorithm applied by the agents to solve~\eqref{eq:problem}, characterized by $H$ hyperparameters (\textit{e.g.} stepsize, penalty, ...) collected in the configuration $\lambda \in \mathbb{R}^H$. We denote the output of $\mathcal{A}$ with configuration $\lambda$ and after $\tau$ iterations by $\bm{x}^{[\tau]} = \mathcal{A}(\bm{x}^{[0]}, \lambda, \tau)$, where $\bm{x}^{[0]} = (x_1^{[0]}, \dots, x_N^{[0]}) \in \mathbb{R}^{n N}$ are the initial conditions, and $x_i^{[t]}$ denotes the state of agent $i$ at iteration $t \in \mathbb{N}$. The performance of $\mathcal{A}$ -- for example, how quickly the $x_i^{[t]}$ converge to a solution of~\eqref{eq:problem} -- strongly depends on the choice of $\lambda$. The goal in this paper is to design \textit{distributed} algorithms that find the configuration leading to the best performance, as defined in the following.

Let $\Lambda = \Lambda_1 \times \cdots \times \Lambda_H \subset \mathbb{R}^H$ be the set of admissible hyperparameter configurations, with $\Lambda_h$ being bounded intervals.
Denoting by $J(\mathcal{A}(\bm{x}^{[0]}, \lambda, \tau)) \in \mathbb{R}$ the metric that evaluates the performance of $\mathcal{A}$, the objective is for the agents in $\mathcal{G}$ to minimize $J$ over the configuration space $\Lambda$.
The dependence of $J$ on hyperparameters $\lambda$ is indirect, since it is applied to the output of $\mathcal{A}$; nonetheless, in the following we write $J(\lambda)$ as shorthand for $J(\mathcal{A}(\bm{x}^{[0]}, \lambda, \tau))$. Therefore, we formulate the hyperparameter tuning problem of interest as:\looseness-1
\begin{equation}\label{eq:selection}
    \lambda^\star \in \operatorname*{arg\,min}_{\lambda \in \Lambda} \; J(\lambda).
\end{equation}

This problem is in general difficult to solve, since $J$ need not have a closed form expression and might be nonconvex. Additionally, solving~\eqref{eq:selection} in a distributed fashion raises several challenges, summarized in the following and addressed in section~\ref{sec:algorithm}:
\begin{itemize}
\item[\textbf{(C1)}] Selecting and evaluating $J$: there are several options for choosing $J$, including the empirical convergence rate or a measure of suboptimality. However, only a subset of these options is suitable for distributed hyperparameter tuning, since no individual agent has access to global information. The goal thus is to \textit{select a performance metric that can be evaluated (at least approximately) via a distributed algorithm}.

\item[\textbf{(C2)}] Coordinating the choice of $\lambda$: in a federated setting, the choice of which configuration $\lambda$ to employ after tuning can be performed centrally by the coordinator. In a distributed setting, however, \textit{we need to design a coordination protocol so that the agents can cooperatively pick which $\lambda$ to use}.
\end{itemize}
Section~\ref{sec:algorithm} details how the proposed tuning methods address these challenges using average consensus and minimum consensus algorithms as building blocks. In the next section, we present some preliminaries about these consensus algorithms.

\section{Preliminaries on Consensus Algorithms}
\label{sec:prelims}

\subsection{Average consensus}
\label{sec:avgcons}
An average consensus protocol allows the agents to cooperatively estimate the average of local values, which is important to address (C1). This is often accomplished only asymptotically, but hyperparameter tuning requires finite-time termination. Therefore, we introduce the following definition.

\begin{definition}[$(K, \varepsilon)$-average consensus]\label{def:avgcons}
A distributed algorithm $\bm{y}^{[k+1]} = \textsc{AvgCons}(\bm{y}^{[k]})$, $\bm{y}^{[k]} = (y_1^{[k]}, \ldots, y_N^{[k]})$, achieves $(K, \varepsilon)$-average consensus if
\begin{equation}\label{eq:avgcons}
    \norm{\bm{y}^{[K]} - \frac{\mathbf{1} \mathbf{1}^\top}{N} \bm{y}^{[0]}} \le \varepsilon \quad \forall i.
\end{equation}
\end{definition}

In the following we review three average consensus protocols and show under which conditions they meet Definition~\ref{def:avgcons}.

\paragraph{Standard average consensus}
The classical approach to average consensus is defined by
\begin{equation}\label{eq:linavgcons}
    y_i^{[k+1]} \;=\; w_{ii}\, y_i^{[k]} \;+\; \sum_{j \in \mathcal{N}_i} w_{ij}\, y_j^{[k]},
\end{equation}
where the $w_{ij} \ge 0$ are fixed mixing weights (collected in matrix $W$) satisfying $w_{ij} = 0$ whenever $j \notin \mathcal{N}_i \cup \{i\}$.
Algorithm~\eqref{eq:linavgcons} satisfies the following \cite{xiao_2004}.

\begin{lemma}\label{prop:stdavgcons}
If $W = [w_{ij}]$ is symmetric and doubly stochastic, then~\eqref{eq:linavgcons} achieves $(K, \varepsilon)$-average consensus with $\varepsilon = \rho^K \sigma$, where $\rho$ is the spectral radius of $W - \mathbf{1}\mathbf{1}^\top / N$ and $\sigma = \norm{(I - \mathbf{1}\mathbf{1}^\top / N) \bm{x}^{[0]}}$.
\end{lemma}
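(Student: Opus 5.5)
We write the iteration~\eqref{eq:linavgcons} in stacked form as $\bm{y}^{[k+1]} = W \bm{y}^{[k]}$, so that $\bm{y}^{[K]} = W^K \bm{y}^{[0]}$. (For vector-valued local states one uses $W \otimes I$ instead, and every step below carries over verbatim.) Throughout we set $P = \mathbf{1}\mathbf{1}^\top/N$ and read $\bm{x}^{[0]}$ in the definition of $\sigma$ as the initial condition $\bm{y}^{[0]}$ of the consensus protocol. The goal is to bound $\norm{W^K \bm{y}^{[0]} - P\bm{y}^{[0]}}$ by $\rho^K \norm{(I-P)\bm{y}^{[0]}}$.

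\emph{Step 1: algebraic identities from double stochasticity.} Since $W\mathbf{1} = \mathbf{1}$ and $\mathbf{1}^\top W = \mathbf{1}^\top$, we have $WP = PW = P$ and $P^2 = P$. These give $(W-P)^k = W^k - P$ for all $k \ge 1$, which follows by induction since $(W^k - P)(W-P) = W^{k+1} - W^kP - PW + P^2 = W^{k+1} - P$. They also give $(W-P)P = 0$. Combining the two,
\begin{equation*}
W^K\bm{y}^{[0]} - P\bm{y}^{[0]} = (W-P)^K \bm{y}^{[0]} = (W-P)^K (I-P)\bm{y}^{[0]}.
\end{equation*}

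\emph{Step 2: norm bound via symmetry.} Because $W$ is symmetric, $W-P$ is symmetric too. Its induced Euclidean norm therefore equals its spectral radius $\rho$, and $\norm{(W-P)^K} = \rho^K$. Taking norms in the identity from Step 1 yields
\begin{equation*}
\norm{\bm{y}^{[K]} - P\bm{y}^{[0]}} \le \rho^K \norm{(I-P)\bm{y}^{[0]}} = \rho^K\sigma,
\end{equation*}
which is exactly~\eqref{eq:avgcons} with $\varepsilon = \rho^K\sigma$. Since each component is bounded by the full vector norm, the per-agent reading ``$\forall i$'' in Definition~\ref{def:avgcons} also holds.

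\emph{Main obstacle.} The only delicate point is Step 2: for a general (nonsymmetric) matrix, the spectral radius does not bound the operator norm, and one would obtain only $C\rho^K$ with some constant $C \ge 1$. Symmetry is precisely what removes this constant. The hypotheses do not need connectivity, because the bound holds whatever the value of $\rho$. Connectivity of $\mathcal{G}$, together with positive self-weights, only serves to ensure $\rho < 1$, so that $\varepsilon \to 0$; this is a remark rather than part of the claimed statement.
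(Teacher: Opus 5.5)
Your proof is correct, and it is essentially the standard argument behind the result the paper cites from Xiao and Boyd (the paper gives no proof of its own). Double stochasticity gives $W^K\bm{y}^{[0]} - P\bm{y}^{[0]} = (W-P)^K(I-P)\bm{y}^{[0]}$, and symmetry makes the spectral norm of $W-P$ equal to $\rho$. One small correction to your closing aside, which does not affect the proof: for a nonsymmetric, non-diagonalizable $W-P$ the general bound is $C_\delta(\rho+\delta)^K$, or a polynomial in $K$ times $\rho^K$, rather than $C\rho^K$.
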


\paragraph{Finite-time exact ratio consensus (FTERC)}
The FTERC protocol~\cite{charalambous_2013, rikos_2025} achieves exact averaging in a finite number of steps, under exact arithmetic. The protocol is based on ratio consensus, and the idea is that the agents reconstruct the average (to which ratio consensus converges) after running a finite number of its iterations.

\begin{lemma}\label{prop:fterc}
FTERC, under exact arithmetic, achieves $(2 \bar{N}, 0)$-average consensus for any $\bar{N} \ge N$.
\end{lemma}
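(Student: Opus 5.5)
The plan is to use the standard FTERC construction, namely ratio consensus combined with minimal-polynomial (Hankel kernel) reconstruction of the limit. The proof proceeds in three steps.

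First, we fix the protocol. Each agent runs two parallel linear iterations with the same column-stochastic matrix $P$ consistent with $\mathcal{G}$ (for instance $p_{ji} = 1/(1+|\mathcal{N}_i|)$). The first is the numerator iteration $z^{[k+1]} = P z^{[k]}$ with $z^{[0]} = \bm{y}^{[0]}$. The second is the denominator iteration $w^{[k+1]} = P w^{[k]}$ with $w^{[0]} = \mathbf{1}$. Since $\mathcal{G}$ is connected and undirected and $P$ has positive diagonal, $P$ is primitive with simple eigenvalue $1$ and right Perron vector $\pi > 0$. Hence
\[
z_i^{[k]} \to \pi_i \mathbf{1}^\top \bm{y}^{[0]}, \qquad w_i^{[k]} \to \pi_i N,
\]
and the ratio of the two limits equals the average $\frac{1}{N}\mathbf{1}^\top \bm{y}^{[0]}$ at every agent. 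The remaining task is to show that these limits can be computed exactly from finitely many local samples.

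Second, we carry out the local reconstruction. Consider agent $i$ and its scalar sequence $z_i^{[k]} = e_i^\top P^k z^{[0]}$. By Cayley--Hamilton, this sequence satisfies a linear recurrence whose characteristic polynomial is the minimal polynomial $q_i$ of the pair $(P, e_i^\top)$ restricted to $z^{[0]}$. The degree of $q_i$ is $d_i + 1 \le N$, and $q_i(1) = 0$ because $1$ is an eigenvalue. We write $q_i(t) = (t-1)\bar q_i(t)$ with $\bar q_i(t) = \sum_{j=0}^{d_i} \beta_j t^j$. The standard final-value argument (a $z$-transform of the recurrence) then gives
\[
\lim_{k\to\infty} z_i^{[k]} = \frac{\sum_{j=0}^{d_i} \beta_j\,\big(z_i^{[j+1]} - z_i^{[j]}\big)}{\bar q_i(1)}\cdot \frac{\cdot}{\cdot}\!,
\]
and more precisely the limit equals $\frac{\mathbf{z}_{\Delta}^\top \beta}{\mathbf{1}^\top \beta}$, where $\mathbf{z}_\Delta$ collects the differences $z_i^{[j+1]} - z_i^{[j]}$. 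The coefficients $\beta$ are obtained as the kernel vector of the Hankel matrix built from these differences. The analogous formula holds for the $w$-sequence.

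To determine the kernel, agent $i$ stores $z_i^{[k]}$ for $k = 0, \dots, 2\bar N$. It then increases the Hankel size until the matrix loses rank. Since $d_i + 1 \le N \le \bar N$, a window of $2\bar N + 1$ samples produces $2\bar N$ differences. This is enough to form the square Hankel matrix of order $d_i + 1$ whose kernel yields $\beta$. This is the origin of the iteration count $K = 2\bar N$. Because an upper bound $\bar N$ suffices, agents do not need to know $N$ exactly.

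Third, under exact arithmetic both limits are recovered exactly. Agent $i$ outputs their ratio, which equals the exact average. Therefore $\bm{y}^{[2\bar N]} = \frac{\mathbf{1}\mathbf{1}^\top}{N}\bm{y}^{[0]}$, and so $\varepsilon = 0$.

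The main obstacle is the second step. One must justify rigorously that the first rank-deficient Hankel matrix encountered by the agent gives precisely the coefficients of $\bar q_i$, or at least some annihilating polynomial with $\bar q_i(1) \neq 0$. The concern is a spurious earlier kernel arising from coincidental values. I would handle this by citing the minimal-polynomial/Hankel characterization from \cite{charalambous_2013}: the rank of the infinite Hankel matrix equals $\deg \bar q_i$, so the first kernel is exactly the minimal one. The simplicity of the eigenvalue $1$ then ensures $\mathbf{1}^\top\beta = \bar q_i(1) \neq 0$, which makes the division well defined.
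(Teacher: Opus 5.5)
Your route (ratio consensus with a column-stochastic $P$, local reconstruction of both limits from the kernel of a Hankel matrix of successive differences, then the ratio) is the construction in the works the paper cites; the paper itself gives no argument beyond that citation. However, the final-value formula in your second step is wrong, and with it the protocol fails. Write $\delta_i^{[k]} = z_i^{[k+1]} - z_i^{[k]}$. You take $\beta$ in the kernel of the Hankel matrix $[\delta_i^{[j+l]}]_{j,l=0}^{d_i}$. Its first row reads $\sum_j \beta_j \delta_i^{[j]} = 0$, which is exactly your numerator $\mathbf{z}_\Delta^\top\beta$. Indeed, $\sum_j \beta_j \delta_i^{[k+j]} = e_i^\top P^k q_i(P) z^{[0]} = 0$ for every $k$. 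So your formula returns $0$ for both the $z$- and the $w$-limit, and the output is $0/0$. The correct formula uses the raw samples: $\lim_k z_i^{[k]} = \bigl(\sum_{j=0}^{d_i}\beta_j z_i^{[j]}\bigr) / \bigl(\sum_j \beta_j\bigr)$. To see this, note that $Q^{[k]} := \sum_j \beta_j z_i^{[k+j]}$ satisfies $Q^{[k+1]} - Q^{[k]} = \sum_j \beta_j \delta_i^{[k+j]} = 0$. Hence $Q^{[0]} = \lim_k Q^{[k]} = \bigl(\sum_j \beta_j\bigr) \lim_k z_i^{[k]}$. This argument only needs $\beta$ to annihilate the difference sequence with $\sum_j \beta_j \neq 0$. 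The minimal annihilator $p_i$ of $\delta_i$ always qualifies, because $P^k(P-I)z^{[0]}$ stays in $\mathrm{range}(P-I)$, on which $1$ is not an eigenvalue of $P$ (it is simple). Working with $p_i$ also removes your claim $q_i(1)=0$, which is false for the $z^{[0]}$-restricted polynomial whenever $\mathbf{1}^\top \bm{y}^{[0]} = 0$.

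Second, your resolution of the obstacle you flag is a non sequitur. That the infinite Hankel matrix has rank $m_i = \deg p_i$ does not make the smaller leading blocks nonsingular. Nothing prevents $\delta_i^{[0]} = 0 \neq \delta_i^{[1]}$; for example, suitable initial data produce $\delta_i^{[k]} = \lambda^k - \mu^k$ for two eigenvalues $\lambda \neq \mu$ of $P$ seen by node $i$. Then the $1\times 1$ Hankel matrix is already singular, and the ``first defective'' rule returns $\beta = (1)$, i.e., the wrong estimate $z_i^{[0]}$. The bound $\bar N$ is what repairs this. Since $m_i \le N-1 < \bar N$, the $\bar N\times\bar N$ Hankel matrix of differences factors as the observability matrix times the reachability matrix of a minimal order-$m_i$ realization of $\delta_i$, both of rank $m_i$. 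So its rank is exactly $m_i$. By the same factorization, its leading $m_i\times m_i$ block is nonsingular, so the leading $(m_i+1)\times(m_i+1)$ block has a one-dimensional kernel spanned by the coefficients of $p_i$. Agent $i$ therefore computes $m_i$ as a rank and then reads off $p_i$, using only $\delta_i^{[0]},\dots,\delta_i^{[2\bar N-2]}$, i.e., $2\bar N-1$ iterations. This, rather than the increase-until-defective loop, is what justifies $K = 2\bar N$ when only an upper bound on $N$ is known.
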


\paragraph{Finite-time quantized coordination (FTQC)}
The idea behind the FTQC protocol~\cite{bastianello_2024} is to apply distributed ADMM for consensus with quantized communications with resolution $\Delta > 0$. The following result holds%
.\footnote{Note that quantization guarantees FTQC converges in finite time when integrating a distributed termination criterion~\cite{bastianello_2024}. In this paper, however, we are interested in consensus protocols running for a fixed number of iterations, hence we fix $K$ and remove the termination criterion from FTQC.
}

\begin{lemma}\label{prop:ftqc}
FTQC achieves $(K, \varepsilon)$-average consensus with $\varepsilon = \mathcal{O}(\mu^K d_0 + \Delta)$, with $d_0$ depending on the initial conditions; in particular $\varepsilon = \mathcal{O}(\Delta)$ when $K \geq \mathcal{O}(\log(1/\Delta))$.
\end{lemma}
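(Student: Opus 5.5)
The plan is to derive the bound from the convergence analysis of distributed ADMM for consensus with quantized communications in \cite{bastianello_2024}, with the termination criterion removed and $K$ fixed. First I will write FTQC in the operator form used there. Distributed ADMM for the consensus problem $\min \sum_i \frac{1}{2}\|y_i - y_i^{[0]}\|^2$ subject to agreement along the edges is a Peaceman--Rachford/relaxed splitting on the edge auxiliary variables $\bm{z}$, so $\bm{z}^{[k+1]} = T(\bm{z}^{[k]})$. Here $T$ is affine, and because the local costs are strongly convex and smooth, $T$ is contractive on the relevant subspace with some factor $\mu \in (0,1)$. The unique fixed point of $T$ gives $y_i = \frac{1}{N}\sum_j y_j^{[0]}$ for all $i$. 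The local update $y_i^{[k]}$ is a Lipschitz (affine) function of the $\bm{z}$ variables incident to agent $i$. Hence $\|\bm{y}^{[k]} - \mathbf{1}\mathbf{1}^\top \bm{y}^{[0]}/N\| \le c\,\|\bm{z}^{[k]} - \bm{z}^\star\|$ for a constant $c$ depending on the penalty and the degrees.

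Second, I will model quantization as an additive bounded perturbation. Only quantized values are transmitted, so the actual recursion is $\bm{z}^{[k+1]} = T(\bm{z}^{[k]}) + \bm{e}^{[k]}$, where $\|\bm{e}^{[k]}\| \le C_q \Delta$. The constant $C_q$ comes from the per-entry quantization error, which is at most $\Delta/2$ for rounding, scaled by the dimension and the number of edges. Using the contraction,
\begin{equation*}
\|\bm{z}^{[k+1]} - \bm{z}^\star\| \le \mu \|\bm{z}^{[k]} - \bm{z}^\star\| + C_q\Delta.
\end{equation*}
Unrolling this recursion gives
\begin{equation*}
\|\bm{z}^{[K]} - \bm{z}^\star\| \le \mu^K d_0 + \frac{C_q \Delta}{1-\mu}, \qquad d_0 = \|\bm{z}^{[0]} - \bm{z}^\star\|.
\end{equation*}
Combining this with the Lipschitz map from step one gives $\varepsilon = \mathcal{O}(\mu^K d_0 + \Delta)$, which is the claim of Definition~\ref{def:avgcons}.

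Finally, the ``in particular'' part follows by choosing $K \ge \log(d_0/\Delta)/\log(1/\mu)$. This makes $\mu^K d_0 \le \Delta$, so $K = \mathcal{O}(\log(1/\Delta))$ with $d_0$ and $\mu$ treated as constants.

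The main obstacle I expect is the contraction claim in the first step. The ADMM operator on edge variables generally has a non-trivial fixed-point set, or is only averaged rather than contractive on the full space. I would handle this as follows. First, I would check that the quantized iterates remain in the affine subspace on which $T$ is strictly contractive; if quantization breaks invariance, I would project the error onto that subspace. Second, I would argue that $\bm{y}$ depends only on the component of $\bm{z}$ in that subspace. If this fails, I would instead invoke the linear-convergence-under-bounded-perturbation result of \cite{bastianello_2024} directly.
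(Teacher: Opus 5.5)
The paper does not prove this lemma. It imports it from \cite{bastianello_2024}; the only added content is the footnote saying the distributed termination rule is dropped and $K$ is fixed. Your proposal reconstructs the bound from a perturbed-contraction argument, which is a correct, self-contained route different from the paper's bare citation. It buys transparency: the constant hidden in $\mathcal{O}(\Delta)$ becomes $c\,C_q/(1-\mu)$, so you see it grow with the number of edges, the dimension and $1/(1-\mu)$, and you get the explicit threshold $K\ge\log(d_0/\Delta)/\log(1/\mu)$. The citation buys brevity and covers the actual FTQC implementation (which quantizer, which messages are quantized, which relaxation parameter), all of which you have to assume.

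The projection step in your last paragraph is not optional. Let $M$ be the linear part of $T$ and $E_1=\ker(M-I)$. The $E_1$-component of $\bm z^{[k]}$ is not damped, so quantization errors can make it drift by up to $K C_q\Delta$, and the unrolled bound of your second step fails on the full edge space. Your fix does go through for the quadratic consensus costs, via three facts.

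(i) $M$ is nonexpansive, so $Mv=v$ implies $M^\top v=v$. Hence $E_1$ and $E_1^\perp$ are both $M$-invariant, and the orthogonal projector $P$ onto $E_1^\perp$ commutes with $M$ and satisfies $\|P\bm e^{[k]}\|\le C_q\Delta$.

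(ii) For relaxation $\alpha\in(0,1)$, $T$ is averaged, so $\|Mw\|^2\le\|w\|^2-\tfrac{1-\alpha}{\alpha}\|(I-M)w\|^2$. Since $I-M$ is injective on $E_1^\perp$, $M$ is a strict contraction there.

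(iii) All fixed points yield the same primal solution, and $y_i$ is an injective affine function of $\sum_{j\in\mathcal{N}_i}z_{ij}$. Hence these node sums vanish on $E_1$, and $\bm y$ depends on $\bm z$ only through $P\bm z$.

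Unrolling $P(\bm z^{[k+1]}-\bm z^\star)=MP(\bm z^{[k]}-\bm z^\star)+P\bm e^{[k]}$ then gives the lemma with $d_0=\|P(\bm z^{[0]}-\bm z^\star)\|$. This $d_0$ does not depend on which fixed point $\bm z^\star$ you pick.
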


\paragraph{Which consensus protocol to use?}
The three consensus protocols come with different trade-offs. The standard average consensus \eqref{eq:linavgcons}, while very simple, achieves consensus only asymptotically, possibly requiring large $K$ to achieve small $\epsilon$.
FTERC achieves exact consensus in finite time, but requires exact arithmetic, which in practice means that communications need to have very high precision.
\begin{table}[ht]
\caption{Comparison of $(K, \varepsilon)$-average consensus protocols.}
\label{tab:protocols}
\begin{center}
\begin{tabular}{lcccl}
\toprule
Protocol & Parameters & Steps & $\varepsilon$ & Notes \\
\midrule
\eqref{eq:linavgcons} & $K$ & $K$ & $\rho^{K}\sigma$ & may require $K \gg 1$ \\
FTERC & $\bar{N}$ & $2\bar{N}$ & $0$ & needs exact comm. \\
FTQC & $\Delta, K$ & $K$ & $\mathcal{O}(\Delta)^{\dagger}$ & $\Delta$ bounds $\varepsilon$ \\
\bottomrule
\\[-6pt]
\end{tabular}
\end{center}
\end{table}
FTQC instead employs quantization, which reduces the communication overhead significantly; however, the resolution $\Delta$ bounds how close to the average the algorithm converges.
Therefore, each of these protocols is better suited to different application scenarios (\textit{e.g.} if there is more or less time available for running the protocol, more or less bandwidth).
Table~\ref{tab:protocols} summarizes this discussion.

\subsection{Minimum consensus}
\label{sec:mincons}
The minimum consensus protocol allows agents to cooperatively find the smallest among their initial states $y_i^{[0]} \in \mathbb{R}$, $i \in \{ 1, \ldots, N \}$. The protocol is characterized by the update
\begin{equation}\label{eq:mincons}
    y_i^{[k+1]} = \min_{j \in \mathcal{N}_i \cup \{i\}} y_j^{[k]}.
\end{equation}
On a connected graph, \eqref{eq:mincons} reaches $\min_{j \in \mathcal{V}} y_j^{[0]}$ within at most $D = \mathrm{diam}(\mathcal{G})$ iterations \cite{cortes_2008}.
However, the agents do not know the global information $D$, thus we make the following assumption.

\begin{assumption}[Upper bound on diameter]\label{ass:network}
The agents know an upper bound $\bar{D} \ge \mathrm{diam}(\mathcal{G})$.
\end{assumption}

The same protocol~\eqref{eq:mincons} applies whenever the values $y_i^{[0]}$ belong to a totally ordered set, with the minimum taken with respect to that order. This will be useful when addressing challenge (C2).

\section{Proposed Algorithms}
\label{sec:algorithm}

To address the hyperparameter selection problem~\eqref{eq:selection} we build on grid search, a classical method for hyperparameter optimization \cite{franceschi_2025}. In grid search, the search space is discretized into the product grid
\begin{equation}\label{eq:grid}
\Gamma = \Gamma_1 \times \cdots \times \Gamma_H, \qquad \Gamma_h \subset \Lambda_h,\ \lvert \Gamma_h \rvert = q_h,
\end{equation}
where $\Gamma_h$ contains equidistant points, for a total of $\lvert \Gamma \rvert = \prod_h q_h$ hyperparameter configurations.
The search evaluates $J$ over the fixed candidate set $\Gamma$ and returns the minimizer. This makes it well suited to the peer-to-peer setting, as agents can cooperatively traverse the same candidate list and select the minimizer, as discussed in the next section.
Our design will make use of the following assumption.

\begin{assumption}[Costs]\label{ass:costs}
The local costs $f_i$, $i \in \{ 1, \ldots, N \}$ are differentiable.
\end{assumption}

\subsection{Distributed grid search}
\label{sec:dgs}
The first step in our design is to define the performance metric $J$ that agents use to score candidate hyperparameters configurations. By (C1), it must be possible for the agents to cooperatively compute (or at least approximate) $J$ only using local information and peer-to-peer communications.
Since Assumption~\ref{ass:costs} holds, we can use the norm of the network-average gradient evaluated at $\bm{x}^{[\tau]} = \mathcal{A}(\bm{x}^{[0]}, \lambda, \tau)$, $\bm{x}^{[\tau]} = (x_1^{[\tau]}, \ldots, x_N^{[\tau]})$, as our metric:
\begin{equation}\label{eq:metric}
    J(\lambda) = \lVert \bar{g}(\lambda) \rVert, \qquad
    \bar{g}(\lambda) = \frac{1}{N} \sum_{i \in \mathcal{V}} \nabla f_i\bigl(x_i^{[\tau]}\bigr),
\end{equation}
where we again abuse notation by writing $\bar{g}(\lambda)$.
Note that if $\bm{x}^{[\tau]}$ is a solution of~\eqref{eq:problem} then $J(\lambda) = 0$; this will not happen in practice, since $\tau$ is finite and distributed algorithms achieve convergence asymptotically. Nonetheless, $J$ gives a good indication of how effective a configuration $\lambda$ is: all runs of $\mathcal{A}$ start from the same initial conditions $\bm{x}^{[0]}$, hence the smaller $J(\lambda)$ is, the closer $\bm{x}^{[\tau]}$ got to a stationary point over the $\tau$ iterations. Selecting for the smallest $J$ thus means that we select the configuration $\lambda$ that leads to the fastest convergence rate.

The choice of~\eqref{eq:metric} solves (C1), since each agent can compute $g_i = \nabla f_i(x_i^{[\tau]})$ locally after $\mathcal{A}$ has run, and then apply $K$ iterations of a consensus protocol to compute (possibly approximately) their average $g(\lambda)$. Thus these computations only involve local information and peer-to-peer communications. Applying this idea leads to the Distributed Grid Search (D-GS) described in Algorithm~\ref{alg:dgs}.
\begin{algorithm2e}[ht]
\caption{Distributed Grid Search (D-GS)}
\label{alg:dgs}
\Input{grid $\Gamma = \{\lambda^{(1)}, \dots, \lambda^{(\lvert \Gamma \rvert)}\}$ of candidates $\lambda^{(\ell)} \in \Lambda$, horizon $\tau$, initial points $\bm{x}^{[0]}$, algorithm $\mathcal{A}$, averaging protocol \textsc{AvgCons}}
$(J_i^\star, \lambda_i^\star) \gets (+\infty, \varnothing)$\;
\For{$\ell = 1, \dots, \lvert \Gamma \rvert$, each agent $i \in \mathcal{V}$\label{line:dgs-loop}}{
    $x_i^{[\tau]} \gets \mathcal{A}(\bm{x}^{[0]}, \lambda^{(\ell)}, \tau)$\label{line:dgs-run}\;
    $g_i \gets \nabla f_i(x_i^{[\tau]})$\label{line:dgs-grad}\;
    $\bar g_i \gets \textsc{AvgCons}(\bm{g})$\label{line:dgs-cons}%
        \tcp*[l]{coordination, $\bar g_i \approx \bar{g}(\lambda^{(\ell)})$}
    $J_i \gets \lVert \bar g_i \rVert$\label{line:dgs-metric}\;%
    %\tcp*[l]{$\lambda^{(\ell)} \notin \Gamma_{\mathrm{fin}}$}
    \If{$J_i \le J_i^\star$\label{line:dgs-best}}{
        $(J_i^\star, \lambda_i^\star) \gets (J_i, \lambda^{(\ell)})$\;
    }
}
$(\hat{J}, \hat{\lambda}) \gets \textsc{MinCons}\big(\bm{J}^\star, \bm{\lambda}^\star\big)$\label{line:dgs-min}%
    \tcp*[l]{agreement}
\end{algorithm2e}

In D-GS, the agents run $\mathcal{A}$ for $\tau$ iterations with configuration $\lambda^{(\ell)}$, then compute the local gradient, and approximate their average through consensus, which yields $J_i$. Based on the value of $J_i$, each agent \textit{locally} decides whether $\lambda^{(\ell)}$ is better than the previous best configuration; that is, if it led to faster convergence.
Importantly, depending on the consensus protocol that is employed, $\{ J_i \}_{i = 1}^N$ might be different; that is, they only approximate the average (this is the case of~\eqref{eq:linavgcons} and FTQC). As a consequence, it is possible that only some of the agents update their local candidate $\lambda_i^\star$.
For this reason, the algorithm integrates an agreement routine at line~\ref{line:dgs-min}. The idea is for the agents to apply $\bar{D}$ minimum consensus iterations on the pairs $\{ (J_i^\star, \lambda_i^\star) \}_{i = 1}^N$ using the following ordering: $(J(\lambda^{(\ell)}), \lambda^{(\ell)}) < (J(\lambda^{(k)}), \lambda^{(k)})$ either if $J(\lambda^{(\ell)}) < J(\lambda^{(k)})$, or if $J(\lambda^{(\ell)}) = J(\lambda^{(k)})$ and $\ell > k$. This means that candidate configurations with smaller $J$ are prioritized, and ties are broken by selecting lexicographically larger configurations. This is a heuristic motivated by the fact that larger stepsizes are preferable, but alternative rules are possible.
Since by Assumption~\ref{ass:network} $\bar{D}$ is an upper bound of the diameter, minimum consensus is guaranteed to terminate with all agents sharing the same configuration, thus solving challenge (C2).

\begin{remark}[Divergent configurations]
Depending on the selected search space $\Lambda$, some of the configurations that are tested by the agents might lead to divergence of $\mathcal{A}$. These would lead to very large values of $J$ and are thus weeded out. In practical implementations, $J$ for a divergent run of $\mathcal{A}$ might return infinity or not-a-number; the latter causes issues when comparing configurations, so in our implementation the agents replace it by $\infty$.
\end{remark}

The above remark highlights the fact that so far we have not constrained the search space in any way. One might select the theoretical hyperparameter ranges defined for $\mathcal{A}$, but, especially for stepsizes, these are extremely conservative (see section~\ref{sec:res-cost}). Rather, hyperparameter tuning is more beneficial when a wider space is explored, finding configurations that do lead to convergence while improving significantly on the performance of theoretical choices.
This motivates the following assumption that will hold throughout.

\begin{assumption}[Search space]\label{ass:feasible-search}
The search grid $\Gamma$ includes at least one configuration for which $\mathcal{A}$ converges.
\end{assumption}

This assumption is very mild and easily satisfied by having the theoretical range of hyperparameters that guarantee convergence be a subset of the search space.

\begin{remark}[Common knowledge]
The set-up of Algorithm~\ref{alg:dgs} requires a degree of common knowledge among the agents, including: the algorithm to be run ($\mathcal{A}$, $\tau$, $\bm{x}^{[0]}$), the search grid and the average consensus protocol. This is no different from all other distributed set-ups where the agents need to know what algorithm to run. Additionally, this information can easily be agreed upon in a distributed fashion.
\end{remark}

\subsection{Distributed zoom grid search}
\label{sec:dzgs}
While D-GS enables distributed hyperparameter tuning, this method comes with following drawbacks: 1) the computational cost grows exponentially with the number of points in the grid (since $\lvert \Gamma \rvert = \prod_k q_k$), and 2) its accuracy is bounded by the resolution of the grid.
To address these challenges, we leverage \textit{zoom grid search} \cite{franceschi_2025}, which applies grid search sequentially over progressively narrower search spaces.

Starting from the search space $\Lambda^{(1)} = \Lambda$, at each zoom round $r$ the agents construct a new, narrower search space $\Lambda^{(r)} \subset \Lambda^{(r-1)} \subset \ldots \subset \Lambda^{(1)}$ based on the results of the previous hyperparameters evaluations.
Formally, let $(\hat{J}^{(r)}, \hat{\lambda}^{(r)})$ be the best configuration found by D-GS during round $r$ in the grid $\Gamma^{(r)} \subset \Lambda^{(r)}$; the new search space is defined as:
\begin{equation}\label{eq:zoom}
    \Lambda_k^{(r+1)} = \Bigl[\hat{\lambda}_k^{(r)} - \tfrac{h_k^{(r)}}{2},\; \hat{\lambda}_k^{(r)} + \tfrac{h_k^{(r)}}{2}\Bigr] \cap \Lambda_k,
\end{equation}
where $h_k^{(r)} = \gamma\, \mathrm{diam}(\Lambda_k^{(r)})$ is the contracted width, $\gamma \in (0,1)$ the contraction factor. The agents can then define the new grid $\Gamma^{(r+1)}$ and apply D-GS to it.
\begin{algorithm2e}[ht]
\caption{Distr. Zoom Grid Search~(D-ZGS)}
\label{alg:dzgs}
\Input{search space $\Lambda$, grid sizes $q_1, \dots, q_K$, rounds $R$, contraction $\gamma$, D-GS}
$\Lambda^{(1)} \gets \Lambda$, \quad $(\hat{J}, \hat{\lambda}) \gets (+\infty, \varnothing)$\;
\For{$r = 1, \dots, R$, each agent $i \in \mathcal{V}$\label{line:dzgs-loop}}{
    build $\Gamma^{(r)}$ over $\Lambda^{(r)}$ according to \eqref{eq:grid}\label{line:dzgs-grid}\;
    $(\hat{J}^{(r)}, \hat{\lambda}^{(r)}) \gets \textsc{D-GS}(\Gamma^{(r)}, \tau, \bm{x}^{[0]}, \mathcal{A}, \textsc{AvgCons})$\label{line:dzgs-dgs}\;
    \If{$r = 1$ \textbf{or} $\hat{J}^{(r)} < \hat{J}$\label{line:dzgs-check}}{
        $(\hat{J}, \hat{\lambda}) \gets (\hat{J}^{(r)}, \hat{\lambda}^{(r)})$\;
    }
    update $\Lambda^{(r+1)}$ according to \eqref{eq:zoom}\label{line:dzgs-zoom}%
    \tcp*[l]{zoom}
}
\end{algorithm2e}
The resulting algorithm is described in Algorithm~\ref{alg:dzgs}, and illustrated in Figure~\ref{fig:zoom-depiction}.
\begin{figure}[ht]
    \centering
    \includegraphics[width=0.9\linewidth]{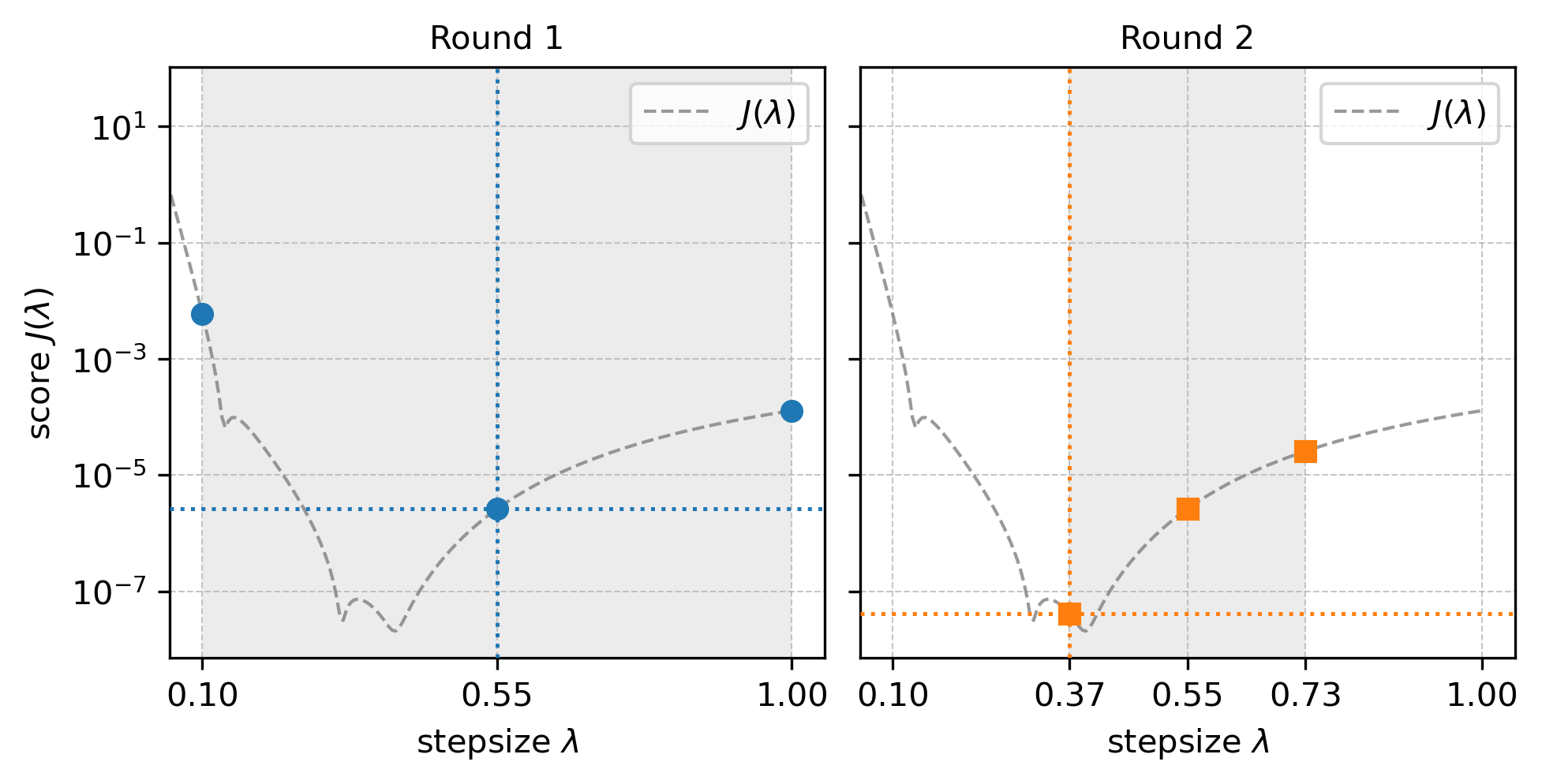}
    \caption{Depiction of D-ZGS with two rounds (in the numerical set-up of section~\ref{sec:res-cost}); the shaded region is $\Lambda^{(r)}$ and the dots the points in $\Gamma^{(r)}$.}
    \label{fig:zoom-depiction}
\end{figure}
We remark that D-ZGS does not require any additional coordination than D-GS, as the zoomed-in search space can be constructed independently by all agents once D-GS outputs the candidate configuration. A drawback of D-ZGS is that it can lead to suboptimal results, as its greedy zooming strategy may prematurely exclude regions containing better yet unexplored configurations.

\subsection{Algorithms' analysis}
\label{sec:analysis}
We now characterize the (sub)optimality of the hyperparameters configuration produced by D-GS and D-ZGS.

\begin{proposition}[D-GS]\label{thm:main}
Let Assumption~\ref{ass:feasible-search} hold and let \textsc{AvgCons} satisfy Definition~\ref{def:avgcons}.
Then D-GS outputs the configuration $\hat{\lambda}$ in finite time, and it satisfies:
\begin{equation}\label{eq:J_bound}
    \min_{\lambda \in \Gamma} J(\lambda) \leq J(\hat{\lambda}) \;\le\; \min_{\lambda \in \Gamma} J(\lambda) + 2 \varepsilon.
\end{equation}
\end{proposition}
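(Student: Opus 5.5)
Finite termination comes first. D-GS loops over the finite grid $\Gamma$. Each iteration runs $\mathcal{A}$ for a fixed $\tau$ iterations, one local gradient evaluation, and $K$ iterations of \textsc{AvgCons}; for FTERC, $K = 2\bar N$ by Lemma~\ref{prop:fterc}. The final agreement step is $\bar D$ iterations of minimum consensus. The total number of steps is therefore at most $\lvert\Gamma\rvert(\tau + 1 + K) + \bar D$. By Assumption~\ref{ass:network} and the discussion of \eqref{eq:mincons}, after $\bar D \ge D$ iterations every agent holds the minimum of the pairs $(J_i^\star,\lambda_i^\star)$ under the given total order. So all agents output the same $\hat\lambda$, and $\hat\lambda \in \Gamma$, since every $\lambda_i^\star$ is some grid point.

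The lower bound is immediate from $\hat\lambda\in\Gamma$. Assumption~\ref{ass:feasible-search} guarantees that the minimum is finite and attained, with divergent configurations assigned $J=\infty$.

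For the upper bound, the key estimate is per-agent accuracy of the metric. For each $\ell$, Definition~\ref{def:avgcons} gives $\lVert \bar g_i - \bar g(\lambda^{(\ell)})\rVert \le \lVert \bm{\bar g} - \mathbf{1}\otimes \bar g(\lambda^{(\ell)})\rVert \le \varepsilon$. The reverse triangle inequality then yields $\lvert J_i(\lambda^{(\ell)}) - J(\lambda^{(\ell)})\rvert \le \varepsilon$ for every agent $i$ and every $\ell$. Let $\lambda^{\mathrm{opt}}$ be a minimizer of $J$ over $\Gamma$. I would fix the agent $i^\ast$ whose pair wins the minimum consensus, so that $\hat\lambda = \lambda_{i^\ast}^\star$ and $\hat J = J_{i^\ast}^\star$. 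Because agent $i^\ast$ kept the running minimum of its own estimates, $J_{i^\ast}(\hat\lambda) = J_{i^\ast}^\star \le J_{i^\ast}(\lambda^{\mathrm{opt}})$. Chaining the estimates gives
$J(\hat\lambda) \le J_{i^\ast}(\hat\lambda) + \varepsilon \le J_{i^\ast}(\lambda^{\mathrm{opt}}) + \varepsilon \le J(\lambda^{\mathrm{opt}}) + 2\varepsilon$.

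The step most likely to cause trouble is the bookkeeping of which agent's estimates are being compared. The minimum consensus mixes values from different agents, so I need a single agent whose own scores of both $\hat\lambda$ and $\lambda^{\mathrm{opt}}$ are compared. Selecting the winning agent $i^\ast$ handles this, because its stored $J^\star_{i^\ast}$ is its own estimate of $\hat\lambda$ and is no larger than its own estimate of $\lambda^{\mathrm{opt}}$. Ties and the $\le$ in line~\ref{line:dgs-best} do not affect the inequality. Infinite or NaN values, replaced by $\infty$, are never selected over a finite one.
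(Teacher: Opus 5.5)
Your proposal is correct and follows the paper's proof essentially step for step. It obtains finite termination from the finite grid and the fixed numbers of $\mathcal{A}$, \textsc{AvgCons} and \textsc{MinCons} iterations, derives the per-agent bound $\lvert J_i(\lambda) - J(\lambda)\rvert \le \varepsilon$ from Definition~\ref{def:avgcons} and the reverse triangle inequality, and uses the same chain of inequalities through the agent $i^\ast$ whose pair wins the minimum consensus. Your explicit remarks that $\hat\lambda \in \Gamma$ (which is what the lower bound actually rests on) and that ties and infinite values do not disturb the chain are small additions that the paper leaves implicit.
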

\begin{proof}
Let us first prove that D-GS terminates in finite time.
For each candidate $\lambda \in \Gamma$, the agents collectively run $\tau$ iterations of $\mathcal{A}$; they then evaluate the local gradients, and run \textsc{AvgCons} for $K$ iterations to achieve Definition~\ref{def:avgcons}.
After all the candidate configurations have been tested, the agents run $\bar{D}$ iterations of \textsc{MinCons}, and successfully select $\hat{\lambda}$.
All of these steps take a finite amount of time, guaranteeing termination in finite time.

We turn now to proving~\eqref{eq:J_bound}.
Let us denote $\bar{g}_i(\lambda)$ the output of \textsc{AvgCons} for agent $i$ when configuration $\lambda$ is being evaluated; and $J_i(\lambda) = \norm{\bar{g}_i(\lambda)}$.
Since \textsc{AvgCons} satisfies Definition~\ref{def:avgcons}, letting $\mathbf{\bar{g}}(\lambda) = (\bar{g}_1(\lambda), \ldots, \bar{g}_N(\lambda)$, for any configuration $\lambda$ we have
\begin{align*}
    \varepsilon^2 &\geq \norm{\mathbf{\bar{g}}(\lambda) - \bm{1} \otimes \bar{g}(\lambda)}^2 = \sum_{i \in \mathcal{V}} \norm{\bar{g}_i(\lambda) - \bar{g}(\lambda)}^2 \\
    &\geq \norm{\bar{g}_i(\lambda) - \bar{g}(\lambda)}^2 \quad \forall i \in \mathcal{V}
\end{align*}
and hence $\norm{\bar{g}_i(\lambda) - \bar{g}(\lambda)} \leq \varepsilon$.
Using the reverse triangle inequality we can then write
\begin{align*}
    \varepsilon \geq \norm{\bar{g}_i(\lambda) - \bar{g}(\lambda)} \geq \left| \norm{\bar{g}_i(\lambda)} - \bar{g}(\lambda) \right| = \left| J_i(\lambda) - J(\lambda) \right|.
\end{align*}
As a consequence, $J_i(\lambda) \leq J(\lambda) + \varepsilon$ and $J(\lambda) \leq J_i(\lambda) + \varepsilon$.

We are now ready to prove~\eqref{eq:J_bound}. Let $\lambda^\star \in \arg\min_{\lambda \in \Gamma} J(\lambda)$, and $i \in \mathcal{V}$ be such that $\hat{\lambda} = \lambda_i^\star$ (that is, the configuration $\lambda_i^\star$ proposed by $i$ is selected during \textsc{MinCons}); then we have:
\begin{align*}
    J(\lambda^\star) &\overset{(i)}{\leq} J(\hat{\lambda}) \overset{(ii)}{=} J(\lambda_i^\star) \overset{(iii)}{\leq} J_i(\lambda_i^\star) + \varepsilon \overset{(iv)}{=} \hat{J} + \varepsilon \\
    &\overset{(v)}{\leq} J_i(\lambda^\star) + \varepsilon \overset{(vi)}{\leq} J(\lambda^\star) + 2 \varepsilon,
\end{align*}
where $(i)$ holds by definition of $\lambda^\star$; $(ii)$ by the fact that $i$'s configuration is selected; $(iii)$ by $J(\lambda) \leq J_i(\lambda) + \varepsilon$ proved above for any $i \in \mathcal{V}$ and $\lambda \in \Gamma$; $(iv)$ same as $(ii)$; $(v)$ since, for any $\lambda \in \Gamma$, $J_i(\lambda_i^\star) \leq J_i(\lambda)$ by construction; $(vi)$ by $J_i(\lambda) \leq J(\lambda) + \varepsilon$ proved above for any $i \in \mathcal{V}$ and $\lambda \in \Gamma$.
\end{proof}

\smallskip

Clearly, the more accurate \textsc{AvgCons} is, the closer the chosen configuration $\hat{\lambda}$ is to the optimal one in $\Gamma$. Now, since D-ZGS employs D-GS as a subroutine, we can state the following result as a consequence of Proposition~\ref{thm:main}.

\begin{corollary}[D-ZGS]\label{cor:dzgs}
Let Assumption~\ref{ass:feasible-search} hold and let \textsc{AvgCons} satisfy Definition~\ref{def:avgcons}.
Then D-ZGS outputs the configuration $\hat{\lambda}$ in finite time, and satisfies:
\begin{equation}\label{eq:dzgs_bound}
    \min_{\lambda \in \Gamma^{(r^\star)}} J(\lambda) \leq J(\hat{\lambda}) \;\le\; \min_{\lambda \in \Gamma^{(r^\star)}} J(\lambda) + 2 \varepsilon.
\end{equation}
where $r^\star$ is the round during which $\hat{\lambda}$ is selected.
\end{corollary}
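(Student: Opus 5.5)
The plan is to reduce the corollary to Proposition~\ref{thm:main}, applied round by round. D-ZGS is a loop of $R$ rounds, and each round $r$ builds a grid $\Gamma^{(r)}$ and calls D-GS on it. Everything in the grid construction is common knowledge: $\Lambda^{(r)}$, the sizes $q_k$ and the rule \eqref{eq:grid}. The zoom update \eqref{eq:zoom} uses only $\hat{\lambda}^{(r)}$, which all agents share once \textsc{MinCons} in D-GS ends. So, by induction on $r$, all agents build the same $\Lambda^{(r)}$ and $\Gamma^{(r)}$, and each call to D-GS is a legitimate instance of Algorithm~\ref{alg:dgs}.

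\textbf{Finite termination.} Each D-GS call terminates in finite time by Proposition~\ref{thm:main}. The grid and zoom updates are local and finite, and there are $R<\infty$ rounds, so D-ZGS terminates in finite time.

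\textbf{Agreement on the final output.} The comparison $\hat{J}^{(r)} < \hat{J}$ at line~\ref{line:dzgs-check} uses the value $\hat{J}^{(r)}$ returned by \textsc{MinCons}. Every agent therefore holds the same value and takes the same branch. Consequently all agents agree on $(\hat{J},\hat{\lambda})$, and on the round $r^\star$ at which it was last updated. The final output is exactly $\hat{\lambda}=\hat{\lambda}^{(r^\star)}$, the output of D-GS on the grid $\Gamma^{(r^\star)}$.

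\textbf{Bound \eqref{eq:dzgs_bound}.} Invoke Proposition~\ref{thm:main} on the grid $\Gamma^{(r^\star)}$. This gives $\min_{\lambda\in\Gamma^{(r^\star)}}J(\lambda)\le J(\hat{\lambda}^{(r^\star)})\le \min_{\lambda\in\Gamma^{(r^\star)}}J(\lambda)+2\varepsilon$, which is the claim. The lower bound needs nothing beyond $\hat{\lambda}\in\Gamma^{(r^\star)}$. For the upper bound, the chain $(i)$--$(vi)$ in the proof of Proposition~\ref{thm:main} applies verbatim with $\Gamma$ replaced by $\Gamma^{(r^\star)}$: the pointwise estimate $|J_i(\lambda)-J(\lambda)|\le\varepsilon$ holds for every evaluated $\lambda$, with the same $\varepsilon$ in every round since the same \textsc{AvgCons} is used.

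\textbf{Hypotheses in every round.} I expect the only delicate point to be checking that Proposition~\ref{thm:main} applies in every round, in particular Assumption~\ref{ass:feasible-search} for $\Gamma^{(r)}$ with $r>1$, which need not contain a convergent configuration. The proof of Proposition~\ref{thm:main} never uses that assumption for \eqref{eq:J_bound}; it only makes the minimum finite and meaningful. I would note that the bound holds for any finite grid, with $J=+\infty$ values handled by the ordering convention of the divergence remark. The assumption on $\Gamma^{(1)}$ then guarantees $\hat{J}<\infty$ after round $1$, so the selected $\hat{\lambda}$ is a convergent configuration. A second point to address is that $\varepsilon$ in Definition~\ref{def:avgcons} may depend on initial conditions, e.g.\ $\sigma$ in Lemma~\ref{prop:stdavgcons}. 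I would interpret $\varepsilon$ as a uniform bound valid for all evaluated configurations, exactly as is implicitly done in Proposition~\ref{thm:main}.
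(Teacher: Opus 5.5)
Your proof is correct and takes the same route as the paper, which states the corollary without a separate proof as a direct consequence of Proposition~\ref{thm:main} applied to the D-GS call of round $r^\star$; your checks on cross-round agreement, the uniformity of $\varepsilon$, and the role of Assumption~\ref{ass:feasible-search} in later rounds spell out what the paper leaves implicit. One aside overreaches: a finite $\hat J$ does not imply that $\hat\lambda$ is a convergent configuration, since $J$ is measured only at the finite horizon $\tau$, but this claim is not needed for the stated bound.
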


We conclude this section by evaluating the computational complexity of the proposed algorithms. Let $t_{\mathcal{A}}$ and $t_{\mathrm{comm}}$ be the time (in seconds) required to run one iteration of $\mathcal{A}$ and consensus, respectively. Then D-GS and D-ZGS terminate after $|\Gamma| \tau t_{\mathcal{A}} + (|\Gamma| K + \bar{D}) t_{\mathrm{comm}}$ and $\sum_{r = 1}^R |\Gamma^{(r)}| \tau t_{\mathcal{A}} + (|\Gamma^{(r)}| K + \bar{D}) t_{\mathrm{comm}}$ seconds.

\section{Numerical Results}
\label{sec:results}
In this section we evaluate the proposed methods on optimization problems of the form~\eqref{eq:problem} with linear and logistic regression local costs, defined by
$
f_i(x) = \tfrac{1}{2M} \lVert A_i x - b_i \rVert^2,
$
with $A_i \in \mathbb{R}^{M \times n}$ the local data matrix and $b_i \in \mathbb{R}^{M}$ the local targets, and
$
f_i(x) = \frac{1}{M} \sum_{m=1}^{M} \log\bigl(1 + \exp\bigl(-b_{i,m}\, a_{i,m}^{\top} x\bigr)\bigr),
$
with samples $a_{i,m} \in \mathbb{R}^{n}$ and labels $b_{i,m} \in \{-1, +1\}$, respectively.
The $N = 20$ agents hold each $M = 30$ samples of dimension $n = 4$, and communicate over a connected $3$-regular graph.
We use $\tau = 50$ and, unless otherwise stated, average consensus~\eqref{eq:linavgcons} with Metropolis weights and $K = 600$.
The data are randomly generated, and the results are averaged over $30$ instances of each problem.
The experiments are implemented in \texttt{decent-bench} \cite{decent_bench}.

\subsection{Comparison with centralized reference}
\label{sec:res-central}
We tune three different algorithms, namely, Exact Diffusion (ED)~\cite{yuan_2019}, distributed ADMM~\cite{bastianello_2018}, and ADMM-Tracking Gradient (ATG)~\cite{carnevale_2023}. These algorithms have $1$, $2$, and $4$ hyperparameters, respectively.
The grids are uniform, and D-ZGS runs $R = 2$ rounds with $\gamma = 0.2$ on coarser grids, so that its total number of candidates is at most half that of D-GS.
The quality of the configuration returned by D-GS or D-ZGS is measured by computing the empirical convergence rate $\nu$ of $\lVert\nabla F(\bar x^t)\rVert$, with smaller $\nu$ meaning faster convergence.
As a reference we use centralized grid search (C-GS), which has access to all agents' data and scores the same grid as D-GS directly using $\nu$ as a metric.
Table~\ref{tab:main} reports the mean rates over the $30$ problem instances.
\begin{table}[ht]
\caption{Mean convergence rate under tuned hyperparameters.}
\label{tab:main}
\begin{center}
\begin{tabular}{llccc}
\toprule
Problem & Algorithm & D-GS & D-ZGS & C-GS \\
\midrule
\multirow{3}{*}{Linear}
 & ED   & 0.88 & 0.88 & 0.87 \\
 & ADMM & 0.76 & 0.76 & 0.75 \\
 & ATG  & 0.79 & 0.78 & 0.79 \\
\midrule
\multirow{3}{*}{Logistic}
 & ED   & 0.89 & 0.89 & 0.88 \\
 & ADMM & 0.78 & 0.75 & 0.74 \\
 & ATG  & 0.82 & 0.83 & 0.81 \\
\bottomrule
\end{tabular}
\end{center}
\end{table}
In five of the six cases D-GS matches C-GS to within $0.01$, and D-ZGS matches or improves on D-GS, despite using a reduced computational budget.
We observe a larger performance gap for ADMM applied to logistic costs; in this case, the agents perform approximate updates, and fail to reach optimality. Therefore, minimizing $\nu$ (as in C-GS) might not be the best strategy and minimizing the gradient norm (as in D-GS and D-ZGS) might yield lower suboptimality. Indeed, the mean gradient norm achieved by ADMM under the different configurations is: $2.90 \times 10^{-6}$ for C-GS, $2.20 \times 10^{-6}$ for D-GS, and $1.21 \times 10^{-6}$ for D-ZGS.

\subsection{Comparison with theoretical hyperparameters}
\label{sec:res-cost}
We compare the computational cost of distributed tuning and deployment of the algorithm against deployment with hyperparameters tuned according to theoretical results.
We consider ED for linear regression, using on the one hand the maximum stepsize allowed by the results in \cite{yuan_2019a} (which we remark requires global knowledge), and, on the other hand, D-GS with search space $(0,\, 2/L]$, where $L = \max_i L_i$ and $L_i$ the smoothness constant of $f_i$.

In Table~\ref{tab:cost}, we report the computational cost incurred by ED with theoretical stepsize, and D-GS with different sizes of grid $\Gamma$. The cost is computed as the sum of the total number of gradient evaluations and peer-to-peer communications, accounting for both tuning and deployment for D-GS.
\begin{table}[ht]
\caption{Computational cost, in units of $10^{3}$.}
\label{tab:cost}
\begin{center}
\begin{tabular}{lcccc}
\toprule
Method & $\lvert \Gamma \rvert$ & Tuning & Deployment & Total \\
\midrule
Theoretical bound & --- & 0 & 16,681 & 16,681 \\
\midrule
\multirow{6}{*}{D-GS}
& 3 & 5.4 & 16.8 & 22.3 \\
& 4 & 7.2 & 5.6 & 12.9 \\
& 5 & 9.0 & 4.0 & 13.0 \\
& 10 & 18.1 & 3.6 & 21.7 \\
& 20 & 36.1 & 3.6 & 39.8 \\
& 100 & 181 & 3.6 & 184 \\
\bottomrule
\end{tabular}
\end{center}
\end{table}
We observe that the theoretical bound is extremely conservative, leading to very poor performance, whereas distributed tuning allows finding a better configuration by exploring a wider search space. We also notice that denser grids lead to better results at the price of a more expensive tuning phase.
Figure~\ref{fig:cost} depicts an example of deployment for different configurations.
\begin{figure}[ht]
    \centering
    \includegraphics[width=0.75\linewidth]{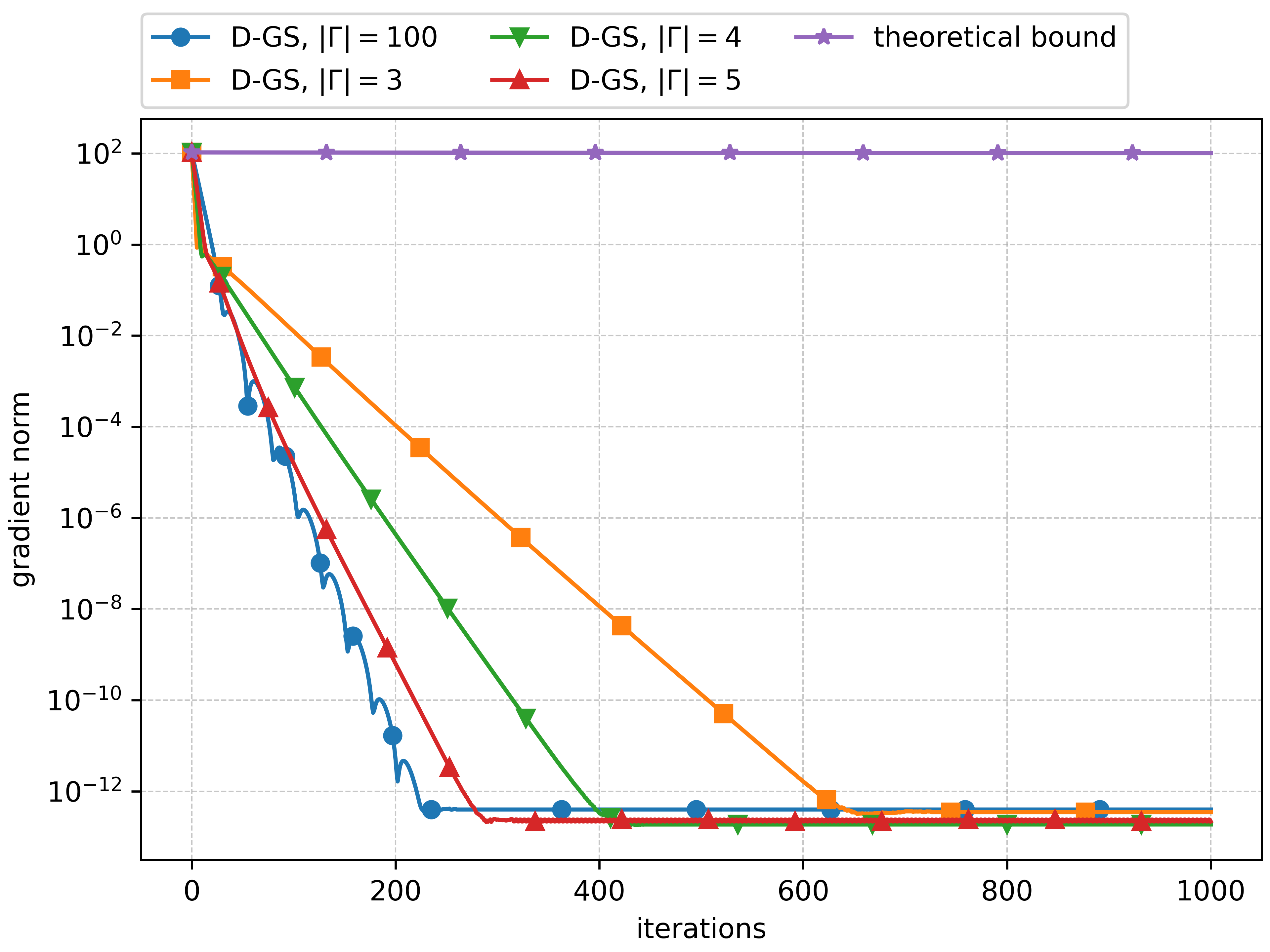}
    \caption{Trajectory of the gradient norm under selected configurations.}
    \label{fig:cost}
\end{figure}

\subsection{Selecting the average consensus protocol}
\label{sec:res-robust}
The previous results used a large $K$ so that consensus error was negligible. In this section we focus on the consensus module employed by D-GS, comparing the alternatives in Table~\ref{tab:protocols}.
To this end, we artificially perturb the average consensus~\eqref{eq:linavgcons} in D-GS to achieve error $\varepsilon$ in mean, and evaluate the performance of the resulting configuration tuned by the algorithm, as depicted in Figure~\ref{fig:robustness}.
\begin{figure}[ht]
    \centering
    \includegraphics[width=0.75\linewidth]{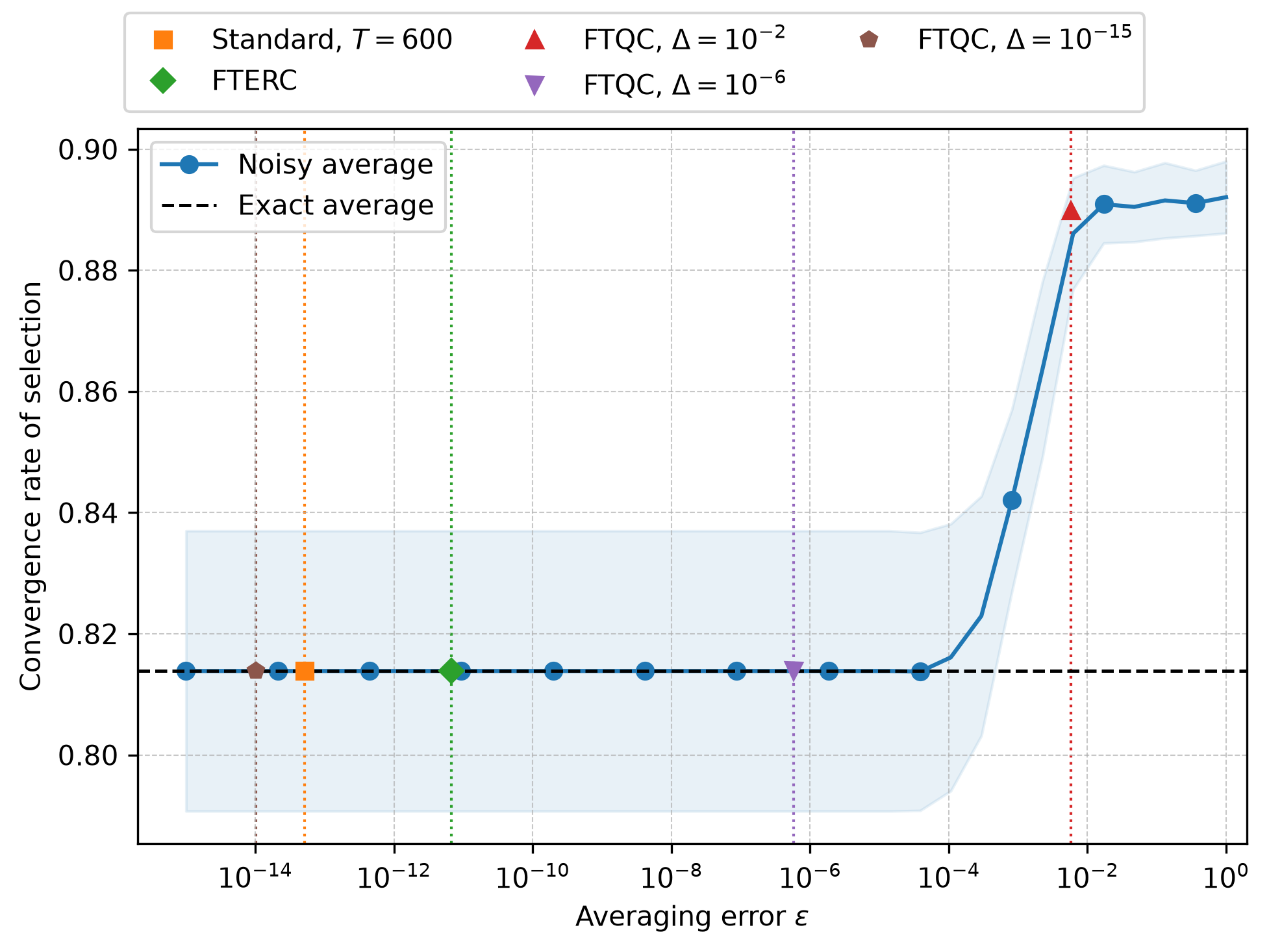}
    \caption{Convergence rate of the configuration selected by D-GS, as a function of the averaging error $\varepsilon$, with the protocols of Table~\ref{tab:protocols}.}
    \label{fig:robustness}
\end{figure}
Interestingly, the impact of the consensus error on performance is felt only above $10^{-4}$; above that threshold, the tuning becomes less accurate. The figure also marks where the protocols of Table~\ref{tab:protocols} lie in terms of performance; this shows, for example, that FTQC with moderate quantization performs well, while reducing the communication overhead.

\section{Conclusion}
\label{sec:conclusion}
We proposed two distributed hyperparameter tuning algorithms, D-GS and D-ZGS, which rely only on local computations and peer-to-peer communications. We compared the computational complexity of the two methods and evaluated the (sub)optimality of the hyperparameters configurations they tune. We evaluated the algorithms on linear and logistic regression problems, comparing against a centralized best choice and a choice dictated by theoretical analyses.
Future work will build on this framework, introducing asynchronous termination to grid search to cut tuning time and expanding to different search strategies.

\bibliographystyle{IEEEtran}
\bibliography{references}

\end{document}